\theoremstyle{plain}
\newtheorem{thm}{Theorem}[section]
\newtheorem{lem}[thm]{Lemma}
\newtheorem{obs}[thm]{Observation}
\newtheorem{cor}[thm]{Corollary}
\newtheorem*{claim*}{Claim}
\theoremstyle{definition}
\newtheorem{defi}[thm]{Definition}
\newtheorem{ex}[thm]{Example}
\theoremstyle{remark}
\newtheorem{rem}{Remark}[thm]
\newcommand{\R}{\mathbb{R}}
\newcommand{\RR}{\mathcal{R}}
\newcommand{\s}{\mathbb{S}}
\newcommand{\id}{\mathrm{id}}
\newcommand{\alg}{\mathrm{alg}}
\DeclareMathOperator{\sym}{sym}
\title{Homotopy properties of regular mappings into real retract rational varieties}
\author{Juliusz Banecki}
\date{}
\begin{document}
\maketitle
\renewcommand{\thefootnote}{}%
\footnotetext{2020 Mathematics Subject Classification. 14P25, 55Q05.}
\footnotetext{Keywords: regular mapping, retract rational variety, homotopy group.}
\renewcommand{\thefootnote}{\arabic{footnote}}%
\begin{abstract}
We study homotopy properties of regular mappings from spheres into a real retract rational variety $Y$. We show that the homotopy classes which are represented by such mappings form subgroups of the homotopy groups of $Y$, and that the groups are independent of the choice of the basepoint on $Y$ as long as $Y$ is connected. We also construct regular representatives of all the Whitehead products in all the homotopy groups of $Y$.
\end{abstract}
\section{Introduction}
Throughout this paper, a \emph{real affine variety} is a topological space equipped with a sheaf of functions with values in $\R$, isomorphic to an algebraic subset of some $\mathbb{R}^n$ endowed with the Zariski topology and the sheaf of regular functions. Morphisms between real affine varieties are called \emph{regular mappings}. Note that, unlike in complex algebraic geometry, every real quasi-projective variety is isomorphic to an affine one. The ring of globally defined regular functions on an affine variety $X$ is denoted by $\RR(X)$. A point $x\in X$ is said to be nonsingular if the corresponding local ring of germs of regular functions $\RR(X)_x$ is a regular local ring. These definitions are compatible with the ones given in \cite{bochnakRealAlgebraicGeometry1998,mangolteRealAlgebraicVarieties2020}.

Our main object of study is the following class of \emph{retract rational varieties}, first introduced by Saltman in \cite{saltmanMultiplicativeFieldInvariants1987}:
\begin{defi}
An affine variety $Y$ is said to be retract rational if there exists a Zariski open and dense subset $U\subset Y$, a Zariski open subset $V\subset \R^n$ for some $n$ and two regular mappings $i:U\rightarrow V, r:V\rightarrow U$ such that their composition
\begin{equation*}
    U\xrightarrow{i} V\xrightarrow{r} U 
\end{equation*}
equals the identity on $U$.
\end{defi}
It is a simple consequence of the definition, that this class of varieties contains all the \emph{rational} and more generally all the \emph{stably rational} ones (an affine variety $Y$ is called stably rational if $Y\times \R^n$ is rational for some $n\geq 0$).

It has recently been realised, through the results of \cite{baneckiRelativeStoneWeierstrassTheorem2024,baneckiRetractRationalVarieties2025}, that the class of retract rational varieties is of great use when studying approximation properties of regular mappings between real affine varieties. The current paper aims to extend these results even further, using tools developed in the aforementioned papers and bringing new ideas into the field. 

We begin by summarising the recent results regarding real retract rational varieties, which will be of some use in the current paper and are interesting on their own. The starting point of the summary is the following theorem:
\begin{thm}[{\cite[Theorem 1.8]{baneckiRetractRationalVarieties2025}}]\label{baneckiRetractRationalVarieties2025}
Let $Y$ be a nonsingular retract rational affine variety. Then it is \emph{uniformly retract rational}, meaning that for each point $y_0\in Y$ there exists a Zariski open neighbourhood $U$ of $y_0$ in $Y$, a Zariski open subset $V\subset \R^n$ for some $n$ and two regular mappings $i:U\rightarrow V, r:V\rightarrow U$ such that their composition
\begin{equation*}
    U\xrightarrow{i} V\xrightarrow{r} U 
\end{equation*}
is equal to the identity on $U$.
\end{thm}

Theorem \ref{baneckiRetractRationalVarieties2025} allows us to reformulate older results of \cite{baneckiRelativeStoneWeierstrassTheorem2024}, which regard uniformly retract rational varieties, so that now we can talk about nonsingular retract rational varieties instead. Without getting into too much detail, the main result of \cite{baneckiRelativeStoneWeierstrassTheorem2024} can be formulated as follows:
\begin{thm}\label{approx_thm}
Let $Y$ be a nonsingular retract rational affine variety, let $X$ be an affine variety, let $Z\subset X$ be a Zariski closed subvariety of $X$ compact in the Euclidean topology, and let $f:X\rightarrow Y$ be a continuous mapping satisfying the following two conditions:
\begin{enumerate}
    \item $f$ is homotopic to a regular mapping $g:X\rightarrow Y$,
    \item $f\vert_Z:Z\rightarrow Y$ is regular. 
\end{enumerate}
Then $f$ can be approximated by regular mappings $\widetilde{f}:X\rightarrow Y$ in the compact-open topology, agreeing with $f$ on $Z$.
\end{thm}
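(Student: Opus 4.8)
The plan is to obtain this statement as a reformulation of the main approximation theorem of \cite{RelativeSW}. That theorem carries the hypothesis that $Y$ be \emph{uniformly} retract rational, whereas Theorem \ref{rr=>urr} tells us that every non-singular retract rational affine variety already enjoys this property. Hence one may replace ``uniformly retract rational'' by ``non-singular retract rational'' in the cited result, and the conclusion --- relative approximation in the compact-open topology, with equality on $Z$ --- transfers verbatim.

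It is instructive to recall the mechanism underlying \cite{RelativeSW}, so that the role of each hypothesis is visible. Uniform retract rationality provides, around each point of $Y$, a Zariski-open neighbourhood $U$ together with regular maps $i:U\to V$ and $r:V\to U$ with $r\circ i=\id_U$, where $V\subset\R^n$ is Zariski-open. Over the $f$-preimage of such a $U$ one approximates the composite $i\circ f$ by a regular map valued in $V$ --- this is possible because continuous maps into Zariski-open subsets of affine space are approximable by regular maps --- and then post-composes with $r$ to return to $U\subseteq Y$. The relative condition is kept intact by arranging the polynomial correction to vanish on $Z$, say by multiplying it by a regular function vanishing on $Z$. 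Hypothesis (1) is needed already for a trivial reason --- a regular map close to $f$ would be homotopic to $f$, so without it there would be nothing to prove --- and in the construction a fixed regular map homotopic to $f$ serves as a scaffold along which the local approximations are organised.

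The substantive difficulty, which I would import from \cite{RelativeSW} rather than redo, is globalisation: the local regular approximations over the various $U$'s must be assembled into a single global regular map $\widetilde f:X\to Y$, and regular functions admit no partitions of unity, so naive gluing fails. In \cite{RelativeSW} this is handled by an induction along a stratification of $Y$, the relative condition on $Z$ being carried through each stage. Granting that machinery, the present statement follows immediately from Theorem \ref{rr=>urr}.
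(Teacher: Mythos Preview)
Your proposal is correct and matches the paper's treatment exactly: the paper does not prove this theorem but presents it as a reformulation of the main result of \cite{RelativeSW}, made possible by Theorem~\ref{rr=>urr}, which upgrades ``non-singular retract rational'' to ``uniformly retract rational''. Your additional sketch of the mechanism behind \cite{RelativeSW} is accurate in spirit and goes beyond what the paper says, but the core argument --- cite \cite{RelativeSW} and invoke Theorem~\ref{rr=>urr} --- is identical.
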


The result is an important tool used in \cite{kucharzSpacesMapsReal2025}, where Kucharz was able to characterise the weak homotopy type of the connected components of the space $\RR(X,Y)$ of regular mappings from an affine variety $X$ to a nonsingular retract rational variety $Y$ equipped with the compact-open topology. It also plays a major role in \cite{bilskiApproximationMapsAlgebraic2025}, where approximation of continuous mapping by piecewise regular ones is considered.

Given these results regarding approximation properties of regular mappings, in this paper we come to study their homotopy properties. Our main results deal with \emph{algebraic homotopy classes}, studied before in \cite{bochnakAlgebraicApproximationMappings1987,bochnakRealizationHomotopyClasses1987,pengAlgebraicMapsSpheres1999} and most recently in \cite{baneckiAlgebraicHomotopyClasses2024,golasinskiSpheresFieldsTheir2024}. To introduce them, we need to fix some notation. Let $\s^n$ denote the $n$-dimensional unit sphere in $\R^{n+1}$, and let $e:=(1,0,\dots,0)\in\s^n$ be the north pole of $\s^n$. Clearly, $\s^n$ is an algebraic subset of $\R^{n+1}$, which allows us to naturally endow it with the structure of an affine variety. Given another affine variety $Y$ with a fixed point $y_0\in Y$, we denote by $\pi_n(Y,y_0)$ the set of homotopy classes of continuous mappings of pointed topological spaces $f:(\s^n,e)\rightarrow (Y,y_0)$. For $n\geq 1$, the set admits a natural group structure and is called the $n$-th homotopy group of $(Y,y_0)$. By $\pi_n^\alg(Y,y_0)$ we denote the set of \emph{algebraic homotopy classes}, i.e. those elements of $\pi_n(Y,y_0)$ which admit representatives $\tilde{f}:(\s^n,e)\rightarrow (Y,y_0)$ being regular mappings. 

Our main result can now be stated as follows:
\begin{thm}\label{thm_groups}
Let $Y$ be a nonsingular retract rational affine variety with a distinguished point $y_0\in Y$. Then $\pi_n^\alg(Y,y_0)$ is a subgroup of the homotopy group $\pi_n(Y,y_0)$ for every $n\geq 1$.
\end{thm}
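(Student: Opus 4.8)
The plan is to verify the three subgroup conditions. I regard $\s^n$ as the (unreduced) suspension of the great subsphere $E:=\{x\in\s^n:x_0=0\}$, with $e$ and its antipode as the two cone points; note that $E$ avoids $e$, and recall that the group law on $\pi_n(Y,y_0)$ is induced by the pinch map $\s^n\to\s^n\vee\s^n$ collapsing $E$. The neutral element is immediate, since the constant map $\s^n\to\{y_0\}$ is regular, so $0\in\pi^\alg_n(Y,y_0)$. For inverses, let $\sigma\colon\s^n\to\s^n$ be the linear reflection $(x_0,\dots,x_{n-1},x_n)\mapsto(x_0,\dots,x_{n-1},-x_n)$; it is a regular automorphism of $\s^n$ fixing $e$ with topological degree $-1$, so if $f$ represents $\alpha\in\pi^\alg_n(Y,y_0)$ then $f\circ\sigma$ is a regular representative of $-\alpha$. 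Hence $\pi^\alg_n(Y,y_0)$ is closed under inversion.

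The substance of the theorem is closure under the group operation: given regular maps $f,g\colon(\s^n,e)\to(Y,y_0)$ I must produce a regular map representing $[f]+[g]$. The mechanism I would use is a \emph{local additive structure at $y_0$} supplied by retract rationality, together with the relative approximation theorem. By Theorem~\ref{rr=>urr} there are a Zariski open $U\ni y_0$ in $Y$, a Zariski open $V\subset\R^N$, and regular maps $i\colon U\to V$, $r\colon V\to U$ with $r\circ i=\id_U$; then
\[
\mu(y,y'):=r\bigl(i(y)+i(y')-i(y_0)\bigr)
\]
is a regular map defined on the Zariski open set $W_\mu:=\{(y,y')\in U\times U:i(y)+i(y')-i(y_0)\in V\}\subset Y\times Y$, which contains $(U\times\{y_0\})\cup(\{y_0\}\times U)$ and satisfies $\mu(y,y_0)=y$ and $\mu(y_0,y')=y'$. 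Thus $\mu$ is a partial fold $Y\vee Y\to Y$: if one can find a regular map $\Phi\colon\s^n\to W_\mu$ whose homotopy class is carried by $\mu_*$ to $[f]+[g]$, then $\mu\circ\Phi$ is the desired representative. To arrange this I would begin from $(f,g)\colon\s^n\to Y\times Y$, $x\mapsto(f(x),g(x))$, deform it --- pushing $f$ and $g$ towards $y_0$ away from their respective hemispheres and re-regularizing by Theorem~\ref{approx_thm}, the approximation being close enough that the image stays in the open set $W_\mu$ --- into a regular map into $W_\mu$ that is homotopic to the standard pinch-and-wedge representative of $(\iota_1)_*[f]+(\iota_2)_*[g]$, and then compose with $\mu$. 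It is also worth recording that the pinch map itself has an algebraic model: the morphism $\s^n\to\R^{n+2}$, $x\mapsto(x_0x_1,\dots,x_0x_n,\,x_0^2,\,x_0)$, collapses $E$ to a point and is injective off $E$, hence realizes the topological pinch $\s^n\to\s^n\vee\s^n$ by a regular map, which is convenient for relating the global picture to the local one.

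The step I expect to be the main obstacle is precisely this reconciliation of rigidity with flexibility. A nonconstant regular map out of the irreducible variety $\s^n$ is never constant on a set with nonempty interior, so one cannot collapse a collar of $E$ nor make $f$ ``supported'' in a hemisphere; moreover, when $\dim Y$ is small, $f$ need not be homotopic to any regular map with image in the affine chart $U$, so the local fold $\mu$ is not by itself sufficient and a genuinely global argument is required in a neighbourhood of the pinch locus $E$. Overcoming this will involve choosing adapted Zariski (and semialgebraic) neighbourhoods of $E$, interpolating $f$, $g$ and the $\mu$-combination on them in a way that is already regular on a compact subvariety to which Theorem~\ref{approx_thm} applies, and then checking --- this being the delicate bookkeeping --- that the regular map so obtained represents $[f]+[g]$ exactly (in particular that no Whitehead-product correction survives, or that any such correction is itself algebraic). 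I expect the bulk of the proof to lie in this local-to-global construction and in the verification of the resulting homotopy class.
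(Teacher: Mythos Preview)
Your overall strategy --- produce a partially defined regular ``multiplication'' $\mu$ on $Y\times Y$ that restricts to the identity on each axis, feed in two regular representatives, and use Theorem~\ref{approx_thm} to make the pair land in the domain of $\mu$ --- is exactly the paper's, and your treatment of the neutral element and inverses matches. The genuine gap is the one you yourself flag: your fold $\mu(y,y')=r\bigl(i(y)+i(y')-i(y_0)\bigr)$ is regular only on a neighbourhood of $(U\times\{y_0\})\cup(\{y_0\}\times U)$, while a regular approximation $\tilde f$ of $f$ has no reason to take values in the chart $U$, so $(\tilde f,\tilde g)$ need not land in $W_\mu$. Your proposed remedy --- a local-to-global patching near the pinch locus $E$, with attendant Whitehead-product bookkeeping --- is not how the paper proceeds, and it is not clear it can be made to work.

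The paper instead upgrades $\mu$ itself: it proves that $(Y,y_0)$ is a \emph{rational H-space}, meaning there is a regular $\mu$ on a Zariski open set containing all of $(Y\times\{y_0\})\cup(\{y_0\}\times Y)$ with $\mu(\,\cdot\,,y_0)$ and $\mu(y_0,\,\cdot\,)$ homotopic to $\id_Y$ rel $\{y_0\}$. The mechanism is a rescaling trick: embed $Y\subset\R^n$, take the ambient retraction $R$ of Observation~\ref{embedding_lem}, and observe that although $(x,v)\mapsto R(x+v)$ is regular only near $(y_0,0)$, one can choose $q\in\RR(Y)$ with $q(y_0)=1$ so that $(x,v)\mapsto R(x+q(x)v)$ extends regularly over a neighbourhood of the whole of $Y\times\{0\}$ (Corollary~\ref{gluing_cor}, a clearing-of-denominators argument). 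A symmetrised version then gives the global $\mu$. With this in hand the addition step is immediate: pick continuous $f_1,f_2$ each constantly equal to $y_0$ on a complementary hemisphere, so $(f_1,f_2)$ already maps into the axes and $\mu(f_1,f_2)$ represents $[f_1]+[f_2]$; regular approximations $\tilde f_i$ with $\tilde f_i(e)=y_0$ keep $(\tilde f_1,\tilde f_2)$ inside $\dom(\mu)$ by openness. No analysis near $E$ and no correction terms are needed.
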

\begin{rem}
As we show in Example \ref{ex_1}, the assumption about $Y$ being retract rational is essential.
\end{rem}

Such a result was earlier established in \cite[Theorem 1.1]{baneckiAlgebraicHomotopyClasses2024} in the special case when $Y$ is isomorphic to $\s^k$ for some $k$. Since spheres are nonsingular and retract rational, the current result is a significant generalisation of that (nonsingularity follows from the Jacobian criterion; for a short proof of retract rationality note that the stereographic projection gives an isomorphism between a Zariski open and dense subset of the sphere and the affine space). A version of the theorem is also known to hold for \emph{algebraic cohomotopy classes} which we will not be defining here; see \cite{baneckiAlgebraicHomotopyClasses2024} for the definition and some other results regarding it.

It is worth noting here, that Theorem \ref{approx_thm} implies that the groups $\pi_n(Y,y_0)$ are independent of the choice of the basepoint $y_0$ as long as $Y$ is connected:
\begin{obs}
Let $Y$ be a nonsingular retract rational variety which is connected (note that according to \cite[Observation 6.1]{baneckiRelativeStoneWeierstrassTheorem2024} this is the case whenever $Y$ is compact). Then, for $n\geq 1$, for any two points $y_0,y_1\in Y$ the groups $\pi^\alg_n(Y,y_0)$ and $\pi^\alg_n(Y,y_1)$ are isomorphic.
\begin{proof}
Let $\varphi:[0,1]\rightarrow Y$ be a path connecting $y_0$ and $y_1$. It induces an isomorphism $\varphi_\ast:\pi_n(Y,y_0)\xrightarrow{\cong}\pi_n(Y,y_1)$. Let $\alpha\in \pi_n^\alg(Y,y_0)$ be represented by a regular mapping $f:(\s^n,e)\rightarrow (Y,y_0)$ and let $g:(\s^n,e)\rightarrow (Y,y_1)$ be a continuous mapping representing $\varphi_\ast(\alpha)$. By construction $f$ and $g$ are homotopic. Hence, according to Theorem \ref{approx_thm}, the mapping $g$ can be approximated by a regular one $\tilde{g}:(\s^n,e)\rightarrow (Y,y_1)$. If the approximation is sufficiently close, it represents the same homotopy class as $g$. This shows that $\varphi_\ast\big(\pi_n^\alg(Y,y_0)\big)\subset\pi_n^\alg(Y,y_1)$. The other inclusion follows by an analogous argument.
\end{proof}
\end{obs}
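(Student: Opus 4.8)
The plan is to split the statement into two layers. That $\pi_n(Y,y_0)\cong\pi_n(Y,y_1)$ as abstract groups is classical and requires nothing beyond basic topology: since $Y$ is nonsingular, its set of real points is a smooth manifold, and ``connected'' means this manifold is connected, hence path-connected; any path $\varphi\colon[0,1]\to Y$ from $y_0$ to $y_1$ then induces the usual change-of-basepoint isomorphism $\varphi_*\colon\pi_n(Y,y_0)\xrightarrow{\cong}\pi_n(Y,y_1)$. The content that genuinely uses retract rationality — and where Theorem \ref{approx_thm} enters — is that $\varphi_*$ carries algebraic homotopy classes to algebraic homotopy classes, i.e. $\varphi_*\big(\pi_n^\alg(Y,y_0)\big)=\pi_n^\alg(Y,y_1)$, and I would record the observation in this sharper form.

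For the inclusion $\varphi_*\big(\pi_n^\alg(Y,y_0)\big)\subset\pi_n^\alg(Y,y_1)$ I would argue as follows. Fix $\alpha\in\pi_n^\alg(Y,y_0)$ with a regular pointed representative $f\colon(\s^n,e)\to(Y,y_0)$, and pick any continuous pointed representative $g\colon(\s^n,e)\to(Y,y_1)$ of $\varphi_*(\alpha)$. By the very construction of $\varphi_*$ — dragging the basepoint along $\varphi$ — the maps $f$ and $g$ are homotopic once basepoints are forgotten. Now apply Theorem \ref{approx_thm} with $X=\s^n$, the given $Y$, and $Z=\{e\}$, a compact Zariski closed subvariety of $\s^n$: hypothesis (1) holds because $g$ is freely homotopic to the regular map $f$, and hypothesis (2) holds because $g|_{\{e\}}$ is the constant map $e\mapsto y_1$, which is regular. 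The theorem produces a regular map $\tilde g\colon\s^n\to Y$ arbitrarily close to $g$ in the compact-open topology and agreeing with $g$ on $Z$; in particular $\tilde g(e)=y_1$, so $\tilde g$ is a regular map of pointed spaces $(\s^n,e)\to(Y,y_1)$.

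It then remains to see that a sufficiently fine such $\tilde g$ is homotopic to $g$ rel $e$, hence represents $\varphi_*(\alpha)$ and witnesses $\varphi_*(\alpha)\in\pi_n^\alg(Y,y_1)$. Realising $Y$ as a closed submanifold of an affine space and choosing a tubular neighbourhood with retraction $\rho$, the straight-line homotopy $(x,t)\mapsto\rho\big((1-t)g(x)+t\tilde g(x)\big)$ is well defined as soon as $\tilde g$ is close enough to $g$, and it is stationary at $e$ because $g$ and $\tilde g$ agree there; thus $g\simeq\tilde g$ rel $e$. This gives one inclusion, and applying the same argument to the reversed path $\bar\varphi$ together with $(\varphi_*)^{-1}=\bar\varphi_*$ gives the other, so $\varphi_*$ restricts to a bijection $\pi_n^\alg(Y,y_0)\to\pi_n^\alg(Y,y_1)$ and in particular the homotopy groups themselves are isomorphic. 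There is no serious obstacle here; the only point requiring care is basepoint bookkeeping — ensuring the approximation is genuinely pointed and that the connecting homotopy does not move $e$ — which is exactly what feeding $Z=\{e\}$ into Theorem \ref{approx_thm} arranges.
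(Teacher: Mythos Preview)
Your argument is correct and follows the paper's own proof essentially step for step: choose a path, use the induced change-of-basepoint isomorphism, and apply Theorem~\ref{approx_thm} with $Z=\{e\}$ to upgrade a continuous representative of $\varphi_*(\alpha)$ to a regular one. You simply spell out more of the bookkeeping (the tubular-neighbourhood homotopy rel $e$, the explicit choice of $Z$), and you rightly note that the nontrivial content is the equality $\varphi_*\big(\pi_n^\alg(Y,y_0)\big)=\pi_n^\alg(Y,y_1)$ rather than the classical isomorphism of the full homotopy groups.
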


Our next result, analogous to \cite[Observation 3.7]{baneckiAlgebraicHomotopyClasses2024}, deals with \emph{Whitehead products} (see e.g. \cite[Chapter X, Section 7]{whiteheadElementsHomotopyTheory1978} for the definition).
\begin{thm}\label{thm_products}
Let $Y$ be a nonsingular retract rational affine variety with a distinguished point $y_0\in Y$. Let $n,m\geq 1$ be natural numbers. Then, for any two classes $\alpha\in \pi_n(Y,y_0),\;\beta\in \pi_m(Y,y_0)$, their Whitehead product $[\alpha,\beta]\in\pi_{n+m-1}(Y,y_0)$ belongs to $\pi_{n+m-1}^\alg(Y,y_0)$.
\end{thm}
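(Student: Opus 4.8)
The plan is to exhibit a concrete representative of $[\alpha,\beta]$, deform it to a regular map, and then invoke Theorem~\ref{approx_thm} to make that regular map pointed. Recall the combinatorial description of the Whitehead product: fixing maps $f\colon(I^{n},\partial I^{n})\to(Y,y_0)$ and $g\colon(I^{m},\partial I^{m})\to(Y,y_0)$ representing $\alpha$ and $\beta$, the class $[\alpha,\beta]$ is represented by the map $w\colon\partial(I^{n}\times I^{m})\to Y$ with
\[
w(s,t)=f(s)\ \text{ for }(s,t)\in I^{n}\times\partial I^{m},\qquad w(s,t)=g(t)\ \text{ for }(s,t)\in\partial I^{n}\times I^{m},
\]
the two prescriptions agreeing on $\partial I^{n}\times\partial I^{m}$, where both are $y_0$. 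The cube boundary $\partial(I^{n}\times I^{m})$ is a topological $(n+m-1)$-sphere, so I would fix once and for all a semialgebraic homeomorphism of it onto $\s^{n+m-1}$ carrying a chosen corner to $\north$, thereby regarding $w$ as a continuous map $\s^{n+m-1}\to Y$ with $w(\north)=y_0$.

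This already reduces the theorem to producing a regular map $\s^{n+m-1}\to Y$ that is merely \emph{freely} homotopic to $w$. Indeed, granted such a map, $w$ satisfies hypothesis (1) of Theorem~\ref{approx_thm}, and it satisfies hypothesis (2) with $Z=\{\north\}$ because the constant map $\{\north\}\to\{y_0\}$ is regular; hence $w$ can be approximated by regular maps agreeing with it at $\north$, and a sufficiently close such approximation is pointed-homotopic to $w$ (the homotopy may be taken inside a tubular neighbourhood of $Y$ and fixing $\north$). That approximation is then a regular representative of $[\alpha,\beta]$, so $[\alpha,\beta]\in\pi^{\alg}_{n+m-1}(Y,y_0)$.

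The heart of the proof — and the step I expect to be the main obstacle — is therefore to make $w$ regular up to free homotopy, and here retract rationality of $Y$, not just its smoothness, is essential: since $\alpha$ and $\beta$ need not be algebraic classes, one cannot simply replace $f$ and $g$ by regular maps, so $[\alpha,\beta]$ must be realised algebraically ``for free''. The route I would pursue uses the factorization $w\simeq(f\vee g)\circ w_{n,m}$ through the universal Whitehead product $w_{n,m}\colon\s^{n+m-1}\to\s^{n}\vee\s^{m}$, together with the James splitting $\Sigma(\s^{n-1}\times\s^{m-1})\simeq\s^{n}\vee\s^{m}\vee\s^{n+m-1}$, which pins $[\alpha,\beta]$ down inside $[\Sigma(\s^{n-1}\times\s^{m-1}),Y]$ as a commutator-type class carried by the top wedge summand. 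The point of bringing in $\s^{n-1}\times\s^{m-1}$ (and likewise $\s^{n}\times\s^{m}$, in which $w_{n,m}$ becomes nullhomotopic) is that it is a nonsingular retract rational variety — a product of rational varieties — so Theorems~\ref{rr=>urr} and~\ref{approx_thm} are available over it; one would then try to assemble the desired regular map on an algebraic model of $\s^{n+m-1}$ piece by piece: after deforming $f$ and $g$ to be constant off small balls, $w$ is concentrated on the two disjoint ``handles'' $D^{n}\times\s^{m-1}$ and $\s^{n-1}\times D^{m}$, and one regularises over these and splices to the constant map $y_0$ by an inductive application of the relative approximation Theorem~\ref{approx_thm}, holding fixed at each stage the part already made regular. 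The point at which a genuinely new argument is needed is to ensure that hypothesis (1) of Theorem~\ref{approx_thm} persists at every step of this induction — that the partially regularised map stays globally homotopic to a regular one — which must be deduced from the facts that its free homotopy class remains the fixed class $[\alpha,\beta]$ throughout and that the regions being altered are contractible.
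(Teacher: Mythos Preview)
Your reduction in the first two paragraphs is fine: once $w$ is shown to be freely homotopic to a regular map, Theorem~\ref{approx_thm} with $Z=\{\north\}$ finishes the argument. The gap is in the third paragraph, and it is exactly the obstacle you yourself flag at the end. Your inductive scheme needs, at every stage, hypothesis~(1) of Theorem~\ref{approx_thm}: the current map must already be homotopic to a regular one. You propose to deduce this from the fact that its free homotopy class stays equal to $[\alpha,\beta]$---but that is circular, since the algebraicity of $[\alpha,\beta]$ is precisely the statement to be proved. The remark that ``the regions being altered are contractible'' does not rescue this: the handles $D^{n}\times\s^{m-1}$ and $\s^{n-1}\times D^{m}$ are not contractible for $m,n\geq 2$, and in any case they are semialgebraic sets with boundary rather than affine varieties, so Theorem~\ref{approx_thm} does not apply to them directly. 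The James splitting you invoke is a statement about stable homotopy and does not by itself produce regular maps.

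What is missing is a way to \emph{algebraically} combine the two halves of $w$, and this is what the paper supplies via its rational H-space structure: a regular ``multiplication'' $\mu$, defined on a Zariski neighbourhood of $Y\times\{y_0\}\cup\{y_0\}\times Y$ in $Y\times Y$, with $\mu(\,\cdot\,,y_0)$ and $\mu(y_0,\,\cdot\,)$ each homotopic to the identity rel~$y_0$. With this in hand the argument is short. Write $h=(f\vee g)\circ\phi$ and set $\psi=(f\vee\mathrm{const})\circ\phi$, $\xi=(\mathrm{const}\vee g)\circ\phi$. Then $\psi$ and $\xi$ are nullhomotopic (they represent $[\alpha,0]$ and $[0,\beta]$), so hypothesis~(1) of Theorem~\ref{approx_thm} holds for each of them trivially, and they can be approximated by regular pointed maps $\tilde\psi,\tilde\xi$. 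At every $x\in\s^{n+m-1}$ one of $\psi(x),\xi(x)$ equals $y_0$ and the other equals $h(x)$, so $h\simeq\mu(\psi,\xi)$ rel~$\north$, and $\mu(\tilde\psi,\tilde\xi)$ is the desired regular representative. In short, the decomposition you want is not geometric (handles on the sphere) but algebraic (factors through $\mu$), and constructing $\mu$ is where retract rationality is actually used.
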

Representation of Whitehead products by regular mappings was considered before in \cite{pengAlgebraicMapsSpheres1999}, where it was shown to be possible for some specific homotopy classes of spheres. Later, in \cite{baneckiAlgebraicHomotopyClasses2024} it was shown to be possible for all the products in homotopy groups of spheres. The current result is an even further generalisation.

Our last result deals with mappings from nonsingular curves into nonsingular retract rational varieties:
\begin{thm}\label{thm_curves}
Let $X$ be a nonsingular affine curve, compact in the Euclidean topology, and let $Y$ be a nonsingular retract rational affine variety. Let $f:X\rightarrow Y$ be continuous. Then $f$ can be approximated by regular mappings $\tilde{f}:X\rightarrow Y$.
\end{thm}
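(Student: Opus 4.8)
The plan is to reduce the statement, via Theorem~\ref{approx_thm} applied with $Z=\emptyset$, to showing that $f$ is homotopic to a regular map; this reduction loses nothing, for a regular map close enough to $f$ in the compact-open topology is automatically homotopic to it ($Y$ being a manifold), so the theorem is in fact equivalent to the assertion that every continuous $X\to Y$ is homotopic to a regular one. Since $X$ is a compact nonsingular real curve, each of its connected components is diffeomorphic to $\s^1$, and $[X,Y]$ is the product of the sets $[\,C,Y\,]$ over the components $C$ of $X$; hence it suffices to realise, for every component, the corresponding free homotopy class — equivalently, a conjugacy class in $\pi_1(Y,y_0)$ for a chosen basepoint $y_0$ ($Y$ being assumed connected without loss of generality) — as the restriction of a regular map. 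I would therefore reduce to the case in which $X$ is connected, so that $X$ is topologically a circle, and the goal becomes: construct a regular map $h\colon X\to Y$ freely homotopic to $f$.

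The first case to settle is the model one, $X=\s^1$, i.e.\ that $\pi_1^\alg(Y,y_0)=\pi_1(Y,y_0)$; by Theorem~\ref{thm_groups} it is enough here to realise a generating set. Fix a retract rational structure $i\colon U\to V$, $r\colon V\to U$ with $U\subset Y$ Zariski open dense, $V\subset\R^n$ Zariski open and $r\circ i=\id_U$ (Theorem~\ref{rr=>urr} makes such structures available near any prescribed point). Given a loop in $Y$, put it in general position with respect to $Y\setminus U$, so that it meets this Zariski closed set in finitely many points, transversally through its smooth locus; near each such point insert an explicit regular arc modelling a transversal crossing of the relevant stratum, while on the complementary closed arcs — whose images lie in $U$ — transport the loop by $i$ into $V$, approximate it there (regular functions being dense in continuous ones on a compact affine variety, and $V$ being open, a sufficiently close approximation stays inside $V$) and carry the result back to $U$ by $r$. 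Matching these finitely many pieces yields a regular loop homotopic to the given one.

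For a general curve $X$ the scheme is the same, the decisive point being that $\dim X=1$, so that the only homotopical datum to be matched lies at the level of $\pi_1$ (no obstruction above $H^1$ arising) and, once the finitely many points mapped into $Y\setminus U$ are excised, there remain only contractible arcs: one covers $X$ by finitely many arcs on which $f$ maps into $U$ together with small arcs around the points sent into $Y\setminus U$, and assembles a regular map from the local algebraic models above, invoking Theorem~\ref{approx_thm} to glue the pieces along the overlaps while preserving the homotopy class. I expect this last assembly to be the main obstacle. The difficulty is genuine: the inclusion $U\hookrightarrow Y$ need not be surjective on $\pi_1$ — already for $Y=\s^1$ and $U=\s^1\setminus\{\ast\}$ — so one cannot simply deform $f$ into the rational chart $U$; the crossings of $Y\setminus U$ must be retained and furnished with compatible regular local models, and the verification that these can be patched into a genuinely regular map on the algebraic curve $X$ — rather than on a mere topological circle — is the heart of the argument.
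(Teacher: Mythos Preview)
Your proposal differs substantially from the paper's proof, and the gap you yourself flag --- the assembly of local regular pieces into a single regular map on $X$ --- is genuine and is not closable along the lines you sketch. Regular maps on an irreducible curve are rigid: a regular function on a Zariski-open arc already determines a rational function on the whole curve, so two independently constructed pieces will almost never agree on an overlap unless they arise from the same global rational map. There is no partition-of-unity mechanism in the regular category, and Theorem~\ref{approx_thm} does not supply one either, since its hypothesis is precisely the existence of a global regular map in the given homotopy class --- the very thing you are trying to produce. The step ``insert an explicit regular arc modelling a transversal crossing'' is also unsubstantiated: nothing in the retract-rational data gives a regular arc in $Y$ through a prescribed point of $Y\setminus U$, and even granting one, matching its endpoints to the neighbouring pieces is the same gluing problem again. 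Finally, the reduction to connected $X$ is more delicate than indicated: the Euclidean components of a compact nonsingular real curve need not be Zariski components, so a regular map on $X$ cannot in general be prescribed independently on each.

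The paper bypasses all of this with a global lifting. One first replaces $Y$ by a nonsingular compactification, takes $U\subset Y$, $V\subset\R^n$, $i$, $r$ from the retract-rational structure, and uses resolution of singularities to produce a nonsingular compact rational variety $Z\supset V$ to which $r$ extends as a regular map $R\colon Z\to Y$. After approximating $f$ by a real-analytic map whose image meets $U$, the composition $i\circ f$ is a meromorphic map from the nonsingular curve $X$ into the compact variety $Z$; because $\dim X=1$, it extends across its finitely many indeterminacies to an analytic map $g\colon X\to Z$ with $R\circ g=f$. Since $Z$ is rational, the Bochnak--Kucharz theorem for curves mapping into rational varieties (the paper's reference \cite{rational}) gives a regular $\tilde g\colon X\to Z$ close to $g$, and $R\circ\tilde g$ is the desired regular approximation of $f$. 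The one-dimensionality of $X$ is used exactly once, to extend the meromorphic lift, and no local patching on $X$ ever occurs.
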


Such a theorem was earlier known to hold in a few other cases: when $Y$ is stably rational (in particular, when $Y$ is rational), when $Y$ is birational to a smooth cubic hypersurface in $\R P^n$ for $n\geq 3$, and when $Y$ is birational to a homogenous space of a real algebraic group (see \cite[Theorem A and Corollary 4.4]{benoistTightApproximationProperty2021}). In particular, this does include examples of varieties which are not retract rational, e.g. the cubic surface in \cite[Example 6.3]{baneckiRelativeStoneWeierstrassTheorem2024}. It is expected that the theorem holds in general for all the \emph{rationally connected} varieties, 
see \cite[Question 4]{kucharzOpenQuestionsReal2022}. If true, this statement would imply all the results mentioned above. Here a variety $Y$ is said to be rationally connected if every two points of $Y$ lying in the same connected component of $Y$ can be connected by a rational curve in $Y$.

As a consequence we obtain a statement regarding algebraic classes in the fundamental group of $Y$:
\begin{cor}\label{cor:first_group}
Let $Y$ be a nonsingular retract rational affine variety and let $y_0\in Y$. Then $\pi_1^\alg(Y,y_0)=\pi_1(Y,y_0)$.
\begin{proof}
Let $f:(\s^1,e)\rightarrow (Y,y_0)$ be a representative of a given homotopy class $\alpha \in \pi_1(Y,y_0)$. According to Theorem \ref{thm_curves}, $f$ can be approximated by a regular mapping $\tilde{f}:\s^1\rightarrow Y$. If the approximation is sufficiently close, then $f$ is homotopic to $\tilde{f}$. Hence we can apply Theorem \ref{approx_thm} to find a regular mapping $\hat{f}:\s^1\rightarrow Y$ which agrees with $f$ at the point $e$ and approximates $f$. If the approximation is sufficiently close then $\hat{f}$ represents $\alpha$.
\end{proof}
\end{cor}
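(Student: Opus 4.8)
The inclusion $\pi_1^\alg(Y,y_0)\subseteq\pi_1(Y,y_0)$ is immediate from the definition, so the whole task is the reverse inclusion, and the plan is to upgrade an arbitrary continuous pointed representative of a class to a regular one without changing its homotopy class. Concretely, given $\alpha\in\pi_1(Y,y_0)$, I would fix a continuous $f\colon(\s^1,e)\to(Y,y_0)$ representing it. Since $\s^1$ is a nonsingular compact affine curve, Theorem \ref{thm_curves} applies directly and produces a regular map $\tilde f\colon\s^1\to Y$ approximating $f$ uniformly. As $Y$ is a nonsingular affine variety, hence a smooth manifold and in particular an ANR, a sufficiently close such approximation is homotopic to $f$, so I may assume $\tilde f\simeq f$ as free (unbased) maps.

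The only thing $\tilde f$ might fail to do is respect the basepoint, i.e.\ it need not satisfy $\tilde f(e)=y_0$, and fixing this is the step I expect to be the crux. Here I would invoke Theorem \ref{approx_thm} with $X=\s^1$ and $Z=\{e\}$, which is a compact Zariski closed subvariety of $\s^1$: hypothesis (1) holds because $f$ is homotopic to the regular map $\tilde f$ just built, and hypothesis (2) holds because $f|_{\{e\}}$, being a map out of a one-point variety, is automatically regular. The theorem then yields a regular $\hat f\colon\s^1\to Y$ that approximates $f$ and agrees with $f$ on $Z$, in particular $\hat f(e)=y_0$.

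To finish, I would note that a sufficiently close approximation $\hat f$ of $f$ which already coincides with $f$ at $e$ is basepoint-preservingly homotopic to $f$ — for instance via the homotopy obtained by retracting the segment $(1-t)f(x)+t\hat f(x)$ back onto $Y$ through a tubular neighbourhood of $Y$, a homotopy that is constantly $y_0$ at $x=e$. Thus $\hat f$ is a regular representative of $\alpha$, giving $\alpha\in\pi_1^\alg(Y,y_0)$. The write-up should be short because all of the genuinely hard content is absorbed into Theorems \ref{thm_curves} and \ref{approx_thm}; the only points requiring a little care are the routine ANR/closeness arguments guaranteeing that each approximation preserves the relevant (first free, then based) homotopy class, and the bookkeeping that $Z=\{e\}$ indeed satisfies the hypotheses of Theorem \ref{approx_thm}.
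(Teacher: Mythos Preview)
Your proposal is correct and follows essentially the same route as the paper's own proof: approximate $f$ by a regular map via Theorem~\ref{thm_curves} to verify hypothesis~(1) of Theorem~\ref{approx_thm}, then apply Theorem~\ref{approx_thm} with $Z=\{e\}$ to obtain a regular representative fixing the basepoint. The only difference is that you spell out the ANR/tubular-neighbourhood justification for why close approximations preserve the (based) homotopy class, which the paper leaves implicit.
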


Given the positive results presented here, one could suspect that the equality $\pi_n^\alg(Y,y_0)=\pi_n(Y,y_0)$ holds true for all $n$ as long as $Y$ is nonsingular retract rational. At the end of the paper we provide Example \ref{ex_2}, which shows that this can fail to be true, even for $n=2$. This demonstrates how delicate our results are, especially when it comes to Theorem \ref{thm_groups}. The author sees the results as evidence suggesting that nonsingular retract rational varieties may comprise just the right class of varieties to study homotopy properties of regular mappings in real algebraic geometry.

\section{Preliminaries}

Recall the following technical lemma playing a role in \cite{baneckiRelativeStoneWeierstrassTheorem2024}:
\begin{lem}[{\cite[Lemma 3.2]{baneckiRelativeStoneWeierstrassTheorem2024}}]\label{gluing_lem}
Let $Y$ be an irreducible variety and let $n$ be a natural number. Let $U\subset Y\times\R^n$ be a Zariski open set such that $U\cap (Y\times\{0\})\neq\emptyset$. Let $f\in\RR(U)$ and let $P,Q\in\R(Y\times\R^n)$ be two rational functions, regular on a neighbourhood of $Y\times\{0\}$ satisfying
\begin{equation*}
    Qf=P\textnormal{ on a neighbourhood of } U\cap (Y\times\{0\}).
\end{equation*}
Suppose further that $f\vert_{U\cap (Y\times\{0\})}$ admits a regular extension onto the entire variety $Y\times\{0\}$. Then, the function
\begin{equation*}
    g(y,v):=f(y,Q(y,0)v)
\end{equation*}
defined on a neighbourhood of $U\cap (Y\times\{0\})$ admits a regular extension to a neighbourhood of $Y\times\{0\}$.
\end{lem}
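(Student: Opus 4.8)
The plan is to prove this by reducing to an application of the gluing principle whose machinery is already available. Write $P = P(y,v)$, $Q = Q(y,v)$ for the given rational functions, both regular near $Y \times \{0\}$, with $Qf = P$ near $U \cap (Y \times \{0\})$. The key observation is that the function $g(y,v) := f(y, Q(y,0)v)$ satisfies, near $U \cap (Y \times \{0\})$,
\begin{equation*}
    Q\bigl(y, Q(y,0)v\bigr)\, g(y,v) = P\bigl(y, Q(y,0)v\bigr),
\end{equation*}
obtained simply by substituting $Q(y,0)v$ for $v$ in the original relation. Along $Y \times \{0\}$ the left-hand coefficient $Q(y, Q(y,0)\cdot 0) = Q(y,0)$, which is exactly the value that appears, so the substituted identity "degenerates correctly": at points where $Q(y,0) = 0$ the numerator $P(y, Q(y,0)v) = P(y,0)$ must also vanish, because $Q(y,0)f(y,0) = P(y,0)$ wherever $f$ is defined and $f(y,0)$ is finite there by the hypothesis that $f\vert_{U \cap (Y \times \{0\})}$ extends regularly to all of $Y \times \{0\}$.

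First I would make the extension hypothesis explicit: let $F \in \RR(Y \times \{0\}) = \RR(Y)$ be the regular extension of $f\vert_{U \cap (Y \times \{0\})}$. Then $g(y,0) = f(y, 0) = F(y)$ for $y$ in the (dense) set where the left side is defined, so $g\vert_{U \cap (Y\times\{0\})}$ also extends regularly to $Y \times \{0\}$, namely by $F$. Next I would verify that $P\bigl(y, Q(y,0)v\bigr)$ and $Q\bigl(y, Q(y,0)v\bigr)$ are rational functions on $Y \times \R^n$ regular on a neighbourhood of $Y \times \{0\}$: this is immediate because $y \mapsto Q(y,0)$ is regular near $Y$ (being a restriction of $Q$, regular near $Y\times\{0\}$), hence $(y,v) \mapsto (y, Q(y,0)v)$ is a regular self-map of a neighbourhood of $Y \times \{0\}$ in $Y \times \R^n$, and $P$, $Q$ are regular near its image $Y \times \{0\}$.

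With these pieces in place, the statement follows by applying Lemma~\ref{gluing_lem} to the function $g \in \RR(U')$ — where $U' \subset U$ is a Zariski open set on which $g$ is defined and which still meets $Y \times \{0\}$, guaranteed by $U \cap (Y\times\{0\}) \neq \emptyset$ together with the density/regularity of the substitution map near $Y\times\{0\}$ — with the pair of rational functions $\widetilde P(y,v) := P(y, Q(y,0)v)$ and $\widetilde Q(y,v) := Q(y, Q(y,0)v)$, noting that $\widetilde Q g = \widetilde P$ near $U' \cap (Y\times\{0\})$ and that $g\vert_{U'\cap(Y\times\{0\})}$ extends to all of $Y \times \{0\}$ via $F$. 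The lemma then yields that $(y,v) \mapsto g\bigl(y, \widetilde Q(y,0) v\bigr) = g\bigl(y, Q(y,0)v\bigr)$ admits a regular extension near $Y \times \{0\}$ — but one must check this is what is wanted, or rather that one can bootstrap to $g$ itself. The main subtlety, and the point deserving the most care, is exactly this bookkeeping: ensuring that the conclusion of Lemma~\ref{gluing_lem} applied to $g$ gives the desired conclusion about $g$ (not about a further twist of $g$), which may require either a small induction on the order of vanishing of $Q(y,0)$, or a direct argument that since $\widetilde Q(y,0) = Q(y,0)$ divides things compatibly, the regular extension of the twisted function descends to one of $g$. I expect the careful handling of the open sets $U, U'$ and the verification that the substitution does not push us outside the domain to be the main technical obstacle, while the algebraic identity itself is routine.
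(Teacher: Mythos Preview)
Your proposal has a fundamental circularity: you attempt to prove Lemma~\ref{gluing_lem} by applying Lemma~\ref{gluing_lem} to the function $g$. That is not a proof of the lemma; it presupposes the very statement you are trying to establish. You even notice the mismatch this creates---the conclusion you would obtain concerns $g(y,\widetilde Q(y,0)v)=f(y,Q(y,0)^2 v)$ rather than $g(y,v)=f(y,Q(y,0)v)$---and you gesture at ``a small induction on the order of vanishing'' to close the gap, but no such induction is actually set up, and in any case an inductive argument whose inductive step invokes the full strength of the target lemma is still circular.

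For comparison: the paper does not prove this lemma at all. It is quoted verbatim as \cite[Lemma~3.2]{RelativeSW} and used as a black box; the actual proof lives in that reference and proceeds by a direct computation exhibiting $g$ as a quotient of functions regular near $Y\times\{0\}$ (roughly, one expands $P(y,Q(y,0)v)$ and $Q(y,Q(y,0)v)$ to first order in $v$, uses the relation $Qf=P$ to cancel the leading $Q(y,0)$, and checks the remaining denominator does not vanish along $Y\times\{0\}$). If you want to supply a self-contained proof, that is the kind of argument you need: work directly with a presentation $f=P/Q$ and show explicitly that after the substitution $v\mapsto Q(y,0)v$ the resulting rational function has no pole along $Y\times\{0\}$.
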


We will need the following corollary of the result:

\begin{cor}\label{gluing_cor}
Let $Y$ be an irreducible variety, let $y_0\in Y$ and let $n$ be a natural number. Let $Z$ be another affine variety and let $F:(Y\times \R^n,(y_0,0))\rightarrow Z$ be a germ of a regular mapping at the point $(y_0,0)$. Then, there exists $q\in \RR(Y)$ such that $q(y_0)=1$ and the germ
\begin{gather*}
    G:(Y\times \R^n,(y_0,0))\rightarrow Z,\\
    G(x,v):= F(y,q(y)v),
\end{gather*}
extends as a regular mapping to a neighbourhood of $Y\times \{0\}$.
\begin{proof}
Embed $Z$ as a Zariski closed subset of the affine space $\R^m$ for some $m$. Since $F$ is regular in a neighbourhood of the point $(y_0,0)$, there exists a common denominator of all the coordinates of $F$, i.e. a regular function $Q\in \RR(Y\times \R^n)$, which can be assumed to attain value $1$ at $(y_0,0)$ such that 
\begin{equation*}
    QF=P\text{ in a neighbourhood of }(y_0,0),
\end{equation*}
for some regular mapping $P:Y\times\R^n\rightarrow \R^m$. It suffices to take $q(y):=Q(y,0)$ and apply Lemma \ref{gluing_lem}.
\end{proof}
\end{cor}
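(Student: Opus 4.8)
The plan is to reduce the statement to a coordinatewise application of Lemma~\ref{gluing_lem}. First I would embed $Z$ as a Zariski closed subset of $\R^m$ for some $m$ and write $F=(F_1,\dots,F_m)$, so that each $F_i$ is a germ at $(y_0,0)$ of a regular function on $Y\times\R^n$, a variety which is irreducible because $Y$ is. A germ of a regular function at a point is a single quotient of two everywhere-regular functions whose denominator does not vanish at that point; taking a common denominator of $F_1,\dots,F_m$ and rescaling, I obtain a function $Q\in\RR(Y\times\R^n)$ with $Q(y_0,0)=1$ together with regular functions $P_i:=QF_i\in\RR(Y\times\R^n)$ defined on all of $Y\times\R^n$, such that $F_i=P_i/Q$ holds on a Zariski open neighbourhood $U$ of $(y_0,0)$.

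Next I would set $q(y):=Q(y,0)\in\RR(Y)$; then $q(y_0)=1$, and I claim $G(y,v):=F(y,q(y)v)$ extends as required. To prove this, apply Lemma~\ref{gluing_lem} separately to each coordinate, taking the ``$f$'' of the lemma to be $F_i$ on the open set $U$ (which meets $Y\times\{0\}$), the ``$P$'' to be $P_i$, and the ``$Q$'' to be $Q$; the relation $QF_i=P_i$ holds near $U\cap(Y\times\{0\})$. The lemma then yields a regular extension of
\[
g_i(y,v)=F_i\big(y,Q(y,0)v\big)=F_i\big(y,q(y)v\big)
\]
to a Zariski open neighbourhood $W_i$ of $Y\times\{0\}$. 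Intersecting, $G=(g_1,\dots,g_m)$ extends regularly to a neighbourhood $W$ of $Y\times\{0\}$. On the nonempty open subset of $W$ where $(y,q(y)v)\in U$, the map $G$ coincides with $F(y,q(y)v)$ and hence is $Z$-valued there; since $W$ is irreducible and $Z$ is Zariski closed, $G(W)\subseteq Z$, which finishes the proof.

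The step I expect to require the most care is the verification of the hypotheses of Lemma~\ref{gluing_lem}, and in particular of the requirement that each $F_i$, restricted to $U\cap(Y\times\{0\})$, extend to a regular function on the whole of $Y\times\{0\}$. This amounts to asking that $F$ restricted to $Y\times\{0\}$, a priori only a germ at $y_0$, extend to a global regular map $Y\to Z$ — a property one should either take as part of the hypotheses or, as in the intended applications, read off from the way $F$ is constructed. It is also what makes the rescaling by $q$ genuinely useful rather than a mere shrinking of the domain: writing $F=P/Q$ and expanding in the $\R^n$-variables, $q$ divides the constant term $P(\cdot,0)=q\cdot F(\cdot,0)$ exactly because $F(\cdot,0)$ is globally regular, and this divisibility is precisely what allows $F(y,q(y)v)$ to be presented with a denominator equal to $1$ along $Y\times\{0\}$. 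Once this point is settled, the remaining steps — clearing denominators, passing to coordinates, patching the finitely many $W_i$, and checking $Z$-valuedness via irreducibility — are routine.
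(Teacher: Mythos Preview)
Your approach is essentially identical to the paper's: embed $Z\subset\R^m$, clear denominators to obtain $QF=P$ with $Q(y_0,0)=1$, set $q(y):=Q(y,0)$, and invoke Lemma~\ref{gluing_lem}. You carry this out coordinatewise and add the verification that the extended map is $Z$-valued, details the paper leaves implicit.

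You are also right to single out the hypothesis of Lemma~\ref{gluing_lem} requiring that $f|_{U\cap(Y\times\{0\})}$ extend regularly to all of $Y\times\{0\}$: the paper's proof does not verify it either, and the Corollary as stated does not include it. Your diagnosis is accurate---without knowing that $F(\,\cdot\,,0)$ is globally regular on $Y$, the lemma does not apply. In the one place the Corollary is used (the construction of the rational H-space structure), one has $F(x,v)=R(x+v)$ with $R|_{Y\cap W}=\id$, so $F(x,0)=x$ is the identity on $Y$ and the missing hypothesis is visibly satisfied; this matches exactly what you anticipated.
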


We also need the following simple observation:
\begin{obs}\label{embedding_lem}
Let $Y\subset \R^n$ be a nonsingular retract rational variety, embedded as a Zariski closed subset in $\R^n$. Fix a point $y_0\in Y$. Then, there exists a Zariski open set $W\subset \R^n$ containing $y_0$, and a regular mapping $R:W\rightarrow Y$ such that $R(y)=y$ for $y\in Y\cap W$.
\begin{proof}
According to Theorem \ref{baneckiRetractRationalVarieties2025}, there exist a neighbourhood $U$ of $y_0$ in $Y$, a Zariski open subset $V\subset \R^m$ for some $m$, and two regular mappings $i:U\rightarrow V,r:V\rightarrow U$ satisfying $r\circ i=\id_U$. The mapping $i$ admits a regular extension $\widetilde{i}$ to a Zariski open set $W\subset \R^n$ containing $U$ as a subset. It suffices to define $R:=r\circ \widetilde{i}$.
\end{proof}
\end{obs}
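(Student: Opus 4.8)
The plan is to deduce the statement from uniform retract rationality (Theorem~\ref{rr=>urr}) together with the standard fact that a regular function on a Zariski open subset of an affine variety $Y\subset\R^n$ is the restriction of a rational function on $\R^n$ that is regular along that subset. The key conceptual point is that uniform retract rationality, unlike the bare definition of retract rationality, supplies a retraction defined on a Zariski open neighbourhood of the \emph{prescribed} point $y_0$; the dense open set coming straight from the definition need not contain $y_0$, so Theorem~\ref{rr=>urr} is exactly what makes the argument go through.

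Concretely, I would first apply Theorem~\ref{rr=>urr} to obtain a Zariski open neighbourhood $U$ of $y_0$ in $Y$, a Zariski open subset $V\subset\R^m$, and regular mappings $i\colon U\to V$ and $r\colon V\to U$ with $r\circ i=\id_U$. Since $U$ is Zariski open in the Zariski closed subset $Y\subset\R^n$, I would fix a Zariski open $W''\subset\R^n$ with $Y\cap W''=U$. Next I would extend $i$ to an ambient neighbourhood: each of the $m$ components of $i$ is a real-valued regular function on $U$, hence the restriction to $U$ of a rational function on $\R^n$ regular at every point of $U$; intersecting the regularity loci of these finitely many rational functions yields a Zariski open set $W'\subset\R^n$ with $U\subset W'$ carrying a regular map $\widetilde{i}\colon W'\to\R^m$ extending $i$.

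I would then set $W:=W'\cap W''\cap\widetilde{i}^{\,-1}(V)$, which is Zariski open in $\R^n$ and contains $y_0$, because $y_0\in U\subset W'\cap W''$ and $\widetilde{i}(y_0)=i(y_0)\in V$. Finally I would define $R:=r\circ\big(\widetilde{i}|_W\big)\colon W\to U\subset Y$, which is a regular mapping as a composition of regular mappings. For $y\in Y\cap W$ one has $y\in Y\cap W''=U$, hence $\widetilde{i}(y)=i(y)$ and therefore $R(y)=r\big(i(y)\big)=(r\circ i)(y)=y$, which is the asserted identity on $Y\cap W$.

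The only non-formal ingredient is the invocation of Theorem~\ref{rr=>urr}; everything else is routine extension of regular maps and bookkeeping with Zariski open subsets of $\R^n$. The one place to be careful is to intersect $W$ with $W''$ so that $Y\cap W$ is contained in the original $U$: without this, $\widetilde{i}$ need not agree with $i$ on $Y\cap W$ (indeed $i$ is only defined on $U$), and the retraction identity $r\circ i=\id_U$ could not be applied. This minor point of control over the various open sets is the main, and only mild, obstacle.
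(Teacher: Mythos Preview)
Your proof is correct and follows essentially the same route as the paper's: apply Theorem~\ref{rr=>urr}, extend $i$ to a regular map $\widetilde{i}$ on a Zariski open subset of $\R^n$, and set $R:=r\circ\widetilde{i}$. Your version is in fact more careful than the paper's, explicitly intersecting with $W''$ and $\widetilde{i}^{\,-1}(V)$ to guarantee that $R$ is well defined and that $Y\cap W\subset U$, points the paper leaves implicit.
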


\section{Rational H-spaces}
To prove Theorem \ref{thm_groups} we introduce the following notion:

\begin{defi}
Let $Y$ be an irreducible affine variety and let $y_0\in Y$. We say that the pair $(Y,y_0)$ is a \emph{rational H-space} if there exist a Zariski open set $A\subset Y\times Y$ containing $Y\times\{y_0\}\cup\{y_0\}\times Y$ and a regular mapping $\mu:A\rightarrow Y$ satisfying $\mu(y_0,y_0)=y_0$ and such that
\begin{enumerate}
    \item $Y\ni y\mapsto\mu(y,y_0)\in Y$ is homotopic to the identity relative to $\{y_0\}$, 
    \item $Y\ni y\mapsto\mu(y_0,y)\in Y$ is homotopic to the identity relative to $\{y_0\}$.
\end{enumerate}
\end{defi}

\begin{thm}
Let $Y$ be a nonsingular retract rational variety, and let $y_0\in Y$ be a distinguished point. Then $(Y,y_0)$ is a rational H-space.
\begin{proof}
Fix an embedding of $Y\subset \R^n$ as a Zariski closed subset in $\R^n$ for some $n$. Let $W$ be the Zariski open subset of $\R^n$ containing $y_0$, and let $R:W\rightarrow Y$ be the regular mapping from Observation \ref{embedding_lem}. After a shift of coordinates we might assume that $y_0$ is the origin in $\R^n$.

Apply Corollary \ref{gluing_cor} to the following germ of a regular mapping at $(0,0)$:
\begin{align*}
    F:(Y\times \R^n,(0,0))\rightarrow Y, \\
    F(x,v):=R(x+v).
\end{align*}
This gives a Zariski open neighbourhood $U$ of $Y\times\{0\}$ in $Y\times\R^n$ and a regular function $q\in\RR(Y)$, such that $q(0)=1$ and the rational mapping
\begin{gather*}
    G:Y\times \R^n\dashrightarrow Y,\\
    G(x,v):= R(x+q(x)v),
\end{gather*}
extends as a regular one onto $U$. 

Using Łojasiewicz's inequality \cite[Proposition 2.6.2]{bochnakRealAlgebraicGeometry1998} we can find $N>0$ such that
\begin{equation*}
    \{(x,v)\in Y\times \R^m:  ||v||<||x||(1+||x||^2)^{-N}|q(x)|\}\subset U.
\end{equation*}
Define $s\in\RR(Y)$ by $s(x):=q^2(x)(1+||x||^2)^{-N}$, and note that $s$ is positive semi-definite and $s(0)=1$. Consider the rational mapping 
\begin{align*}
    \mu&:Y\times Y\dashrightarrow Y, \\
    \mu(x,y)&:=R\left(\frac{s(y)}{s(x)+s(y)}x+\frac{s(x)}{s(x)+s(y)}y\right).
\end{align*}

First of all we claim that the mapping extends to a regular one on a neighbourhood of $Y\times\{0\}$. Near $Y\times\{0\}$ it can be written as
\begin{equation}\label{rep_of_mu}
    \mu(x,y)=G\left(x,(y-x)(1+||x||^2)^{-N}\frac{q(x)}{s(x)+s(y)}\right).
\end{equation}

Keeping in mind that $s(0)=1$ and $s\geq 0$ we see that for $(x,y)\in Y\times\{0\}$ the following inequality holds:
\begin{multline*}
\left\Vert(y-x)(1+||x||^2)^{-N}\frac{q(x)}{s(x)+s(y)}\right\Vert=||x||(1+||x||^2)^{-N}\left|\frac{q(x)}{s(x)+1}\right|\leq\\
\leq ||x||(1+||x||^2)^{-N}|q(x)|.
\end{multline*}
This shows that the point $\left(x,(y-x)(1+||x||^2)^{-N}\frac{q(x)}{s(x)+s(y)}\right)$ belongs to $U$, so \eqref{rep_of_mu} gives a representation of $\mu$ as a regular mapping on a neighbourhood of $Y\times \{0\}$. 

Notice also that the function 
\begin{gather*}
\mu_0:Y\rightarrow Y, \\
\mu_0(x):=\mu(x,0)=G\left(x,-x(1+||x||^2)^{-N}\frac{1}{s(x)+1}\right)
\end{gather*}
is homotopic to the identity through the following homotopy relative to $\{0\}$:
\begin{align*}
    H&:Y\times [0,1]\rightarrow Y, \\
    H(x,t)&:=G\left(x,-tx(1+||x||^2)^{-N}\frac{1}{s(x)+1}\right).
\end{align*}

One can now analogously show that $\mu$ extends to a regular mapping on a neighbourhood of $\{0\}\times Y$ and that the mapping $\mu(0,y)$ considered as a function of $y$ is homotopic to the identity relative to $\{0\}$ as well. Since the property of being regular is local \cite[Proposition 3.2.3]{bochnakRealAlgebraicGeometry1998}, it follows that $\mu$ can be extended to a regular mapping defined on a neighbourhood $A$ of $Y\times \{0\}\cup \{0\}\times Y$. The extension satisfies the desired conditions.
\end{proof}
\end{thm}

\section{Proofs}

Theorems \ref{thm_groups} and \ref{thm_products} now follow easily:

\begin{proof}[Proof of Theorem \ref{thm_groups}]
Given a regular mapping $f:(\s^n,e)\rightarrow (Y,y_0)$, the class $-[f]\in\pi_n(Y,y_0)$ is represented by $f\circ \sym$, where $\sym:\s^n\rightarrow \s^n$ is any symmetry through a vector hyperplane passing through $e$. Hence $\pi_n^\alg(Y,y_0)$ is closed under taking inverse elements. 

Let now $\alpha_1,\alpha_2\in \pi_n^\alg(Y,y_0)$ be two algebraic homotopy classes. Choose $f_1,f_2$ to be their representatives such that $f_1$ is constantly equal to $y_0$ on the hemisphere $\{x_{n+1}\leq 0\}$, while $f_2$ is constantly equal to $y_0$ on the hemisphere $\{x_{n+1}\geq 0\}$. Then $\mu(f_1, f_2)$ is well defined and by definition represents the homotopy class $\alpha_1+\alpha_2$. Applying Theorem \ref{approx_thm}, we can find regular approximations $\widetilde{f_1},\widetilde{f_2}$ of $f_1,f_2$ respectively satisfying $\widetilde{f_1}(e)=\widetilde{f_2}(e)=y_0$. If the approximations are sufficiently close then $\mu(\widetilde{f_1},\widetilde{f_2})$ is well defined and close to $\mu(f_1,f_2)$, hence it represents the same homotopy class.
\end{proof}

\begin{proof}[Proof of Theorem \ref{thm_products}]
By the definition, given two continuous mappings $f:(\s^n,e)\rightarrow (Y,y_0),g:(\s^m,e)\rightarrow (Y,y_0)$ the Whitehead product of their classes is represented by the following composition
\begin{center}
    \begin{tikzcd}
h:\s^{n+m-1} \arrow[r, "\phi"] & \s^n\vee\s^m \arrow[r, "f\vee g"] & Y
\end{tikzcd}
\end{center}
where $\phi$ is the attaching map of the $n+m$-cell in the standard CW-structure of the torus $\s^n\times \s^m$. Let $\text{const}_n:\s^n\rightarrow Y,\;\text{const}_m:\s^m\rightarrow Y$ denote the mappings constantly equal to $y_0$, and define:
\begin{align*}
    \psi:=(f\vee \text{const}_m)\circ \phi, \\
    \xi:=(\text{const}_n\vee g)\circ \phi.
\end{align*}

It is then clear that $\psi$ and $\xi$ are both homotopic to the constant mapping, as they represent the homotopy classes of the products $[f,\text{const}_m]$ and $[\text{const}_n, g]$ respectively. Moreover it follows from the construction that for every $x\in \s^n$ we have
\begin{equation*}
\text{either }
\begin{cases}
\psi(x)=h(x),\\
\xi(x)=y_0,
\end{cases}
\text{or }
\begin{cases}
\psi(x)=y_0,\\
\xi(x)=h(x).
\end{cases}
\end{equation*}
This shows that $h$ is homotopic to $\mu(\psi,\xi)$ relative to $\{e\}$.

Applying Theorem \ref{approx_thm} we can find regular mappings 
\begin{align*}
    \tilde{\psi}:\s^{n+m-1}\rightarrow Y,\\
    \tilde{\xi}:\s^{n+m-1}\rightarrow Y,
\end{align*}
which satisfy $\psi(e)=\xi(e)=y_0$ and approximate $\psi$ (respectively $\xi$) closely. If the approximation is sufficiently close then $\tilde{h}:=\mu(\tilde{\psi},\tilde{\xi})$ is a close regular approximation of $\mu(\psi,\xi)$. In particular it represents the same homotopy class.
\end{proof}
\begin{rem} 
A close analysis of the proof shows, that if instead of $\s^{n+m-1}$ one considers a different real affine variety $\widetilde{\s}^{n+m-1}$ homeomorphic to a sphere with a distinguished point $\tilde{e}$, then it still holds that $[\alpha,\beta]$ is represented by a regular mapping $f:(\widetilde{\s}^{n+m-1},\tilde{e})\rightarrow (Y,y_0)$. This shows that certain homotopy classes of $(Y,y_0)$ are algebraic irrespective of what algebraic structure we equip the sphere with. We plan to study this interesting phenomenon closer in the future in another paper.
\end{rem}

Lastly, we prove Theorem \ref{thm_curves}. The proof is independent of the rest of the theory presented here in the paper, and it relies only on an old result of Bochnak and Kucharz.
\begin{proof}[Proof of Theorem \ref{thm_curves}]
Let $Y$ be a nonsingular retract rational variety. Thanks to the resolution of singularities, without loss of generality we may replacy $Y$ by its nonsingular compactification, so we assume that $Y$ is compact.

Let $U$ be an open subset of $Y$, let $V$ be an open subset of $\R^n$ for some $n$, so that there exist the two regular mappings $i:U\rightarrow V$ and $r:V\rightarrow U$ such that $r\circ i=\id_U$. Again thanks to the resolution of singularities, there exists a nonsingular compactification $Z$ of $V$, such that $r$ extends to a regular mapping $R:Z\rightarrow Y$. 

Let $X$ be a nonsingular compact curve and let $f:X\rightarrow Y$ be continuous. After a small perturbation we can assume that the image of $f$ is not contained in $Y\backslash U$. Also, thanks to \cite[Chapter VII, Theorem 2.25]{guaraldoTopicsRealAnalytic1986}, $f$ can be uniformly approximated by analytic mappings, so we can assume that it is analytic.

The composition $i\circ f$ is now a meromorphic mapping from $X$ into the compact variety $Z$. As $X$ is nonsingular and one-dimensional it follows that it admits an extension to an analytic mapping $g:X\rightarrow Z$, which then necessarily satisfies $R\circ g=f$. As the variety $Z$ is rational it follows from \cite{bochnakWeierstrassApproximationTheorem1999} that $g$ can be approximated by a regular mapping $\tilde{g}:X\rightarrow Z$. Then, $R\circ \tilde{g}$ is a regular approximation of $f$.
\end{proof}

\section{Counterexamples}
In this section we provide two counterexamples: the first one showing that Theorem \ref{thm_groups} fails without the assumption about $Y$ being retract rational, and the second one showing that the conclusion of Theorem \ref{thm_groups} cannot be strengthened to an equality $\pi_n^\alg(Y,y_0)=\pi_n(Y,y_0)$.
\begin{ex}\label{ex_1}
Let us start with a nonsingular curve $C\subset \R^2$ diffeomorphic to $\R$, with the property that that every regular mapping $f:\s^1\rightarrow C$ is constant. For example, one can take the curve explicitly defined by the equation
\begin{equation*}
    y^2=x(x^2+1).
\end{equation*}
Its Zariski closure inside $\mathbb C\mathbb P^2$ is a nonsingular complex projective curve of genus one, so it follows that every rational mapping from a rational curve into $C$ is constant.

Define $Z:=C\times C\subset \R^4$ and let $\sigma:Y\rightarrow Z$ be the composition of two blowups: the first one at a point $z_0\in Z$ and the second one at a point of the exceptional divisor of the first. We have that the exceptional locus $\sigma^{-1}(z_0)$ of $\sigma$ consists of two irreducible components $D_1$ and $D_2$ intersecting at a common point $y_0\in Y$, both isomorphic to the projective line $\R P^1$. 

Since $Z$ is nonsingular, it follows that $Y$ is nonsingular as well. We claim, however, that $\pi_1^\alg(Y,y_0)$ is not a subgroup of $\pi_1(Y,y_0)$. To verify that, let $f:(\s^1,e)\rightarrow (Y,y_0)$ be a regular mapping. By construction of $Z$ the composition $\sigma\circ f$ is constant, so the image of $f$ is contained in $\sigma^{-1}(z_0)=D_1\cup D_2$. Since $\s^1$ is irreducible, we must either have that $f(\s^1)\subset D_1$ or $f(\s^1)\subset D_2$. It follows that
\begin{equation*}
    \pi_1^\alg(Y,y_0)=i^1_\ast(\pi_1^\alg(D_1,y_0))\cup i^2_\ast(\pi_1^\alg(D_2,y_0)),
\end{equation*}
where $i^1:(D_1,y_0)\hookrightarrow (Y,y_0)$ and $i^2:(D_2,y_0)\hookrightarrow (Y,y_0)$ are the inclusions. According to Corollary \ref{cor:first_group} we have $\pi_1^\alg(D_1,y_0)=\pi_1(D_1,y_0)$ and $\pi_1^\alg(D_2,y_0)=\pi_1(D_2,y_0)$, so
\begin{equation*}
    \pi_1^\alg(Y,y_0)=i^1_\ast(\pi_1(D_1,y_0))\cup i^2_\ast(\pi_1(D_2,y_0)).
\end{equation*}

Note now, that since $Z$ is homeomorphic to $\R^2$, it retracts onto an arbitrarily small neighbourhood of $z_0$. Since $\sigma$ is proper and it restricts to a homeomorphism between $Y\backslash \sigma^{-1}(z_0)$ and $Z\backslash\{z_0\}$ it follows that $Y$ retracts onto an arbitrarily small neighbourhood of $\sigma^{-1}(z_0)$. From there, using the fact that $\sigma^{-1}(z_0)$ is an absolute neighbourhood retract, it can be retracted further onto $\sigma^{-1}(z_0)$. It follows that the homomorphism
\begin{equation*}
	i_\ast:\pi_1(D_1\cup D_2,y_0)\rightarrow \pi_1(Y,y_0)
\end{equation*}
has a left inverse, so it is injective. 

Since $D_1\cup D_2$ is homeomorphic to the wedge of two circles, the group $\pi_1(Y,y_0)$ is isomorphic to the free product $\mathbb Z\ast \mathbb Z$, where the two factors correspond to $\pi_1(D_1,y_0)$ and $\pi_1(D_2,y_0)$. It follows that $\pi_1(D_1,y_0)\cup \pi_2(D_1,y_0)$ is not a subgroup of $\pi_1(D_1\cup D_2,y_0)$, so after applying the injective homomorphism $i_\ast$ we get that its image is not a subgroup of $\pi_1(Y,y_0)$.
\end{ex}

\begin{ex}\label{ex_2}
This time we start with an irreducible nonsingular curve $C\subset \s^2$ with two connected components diffeomorphic to $\s^1$. For example, we can take the curve defined by the equation
\begin{equation*}
    \{(x,y,z)\in\s^2:z=y^2-6/5\}.
\end{equation*}
To verify that $C$ is indeed irreducible and nonsingular, note that the projection onto the first two coordinates gives an isomorphism between $C$ and 
\begin{equation*}
    C':=\{(x,y)\in \R^2:x^2+y^2+(y^2-6/5)^2=1\}.
\end{equation*}
The fact that $C'$ is nonsingular is easily checked using the Jacobian criterion, while the fact that it is irreducible follows from \cite[Theorem 4.5.1]{bochnakRealAlgebraicGeometry1998} after noticing that the polynomial $P=x^2+y^2+(y^2-6/5)^2-1$ is irreducible over $\R$.

Now, embed $\s^2$ into $\s^3$ to obtain an embedding of $C$ into $\s^3$. Let $Y$ be the variety obtained by blowing up $\s^3$ along $C$, and let $y_0\in Y$ be any point. Clearly, $Y$ is nonsingular and retract rational. We claim, however, that $\pi_2^\alg(Y,y_0)\neq \pi_2(Y,y_0)$.

To prove that, let us first study the topology of $Y$. Using Van Kampen's theorem one can show that the fundamental group of $Y$ is isomorphic to the free group with two generators. We omit the details concerning this calculation. We have that $H_2(F_2)=0$, where here $H_2$ denotes group homology (for a proof note that the wedge of two circles is a model of the Eilenberg-Maclane space $K(F_2,1)$). It hence follows from a theorem of Hopf on the cokernel of the second Hurewicz homomorphism (\cite[p. 257, Assertion a]{hopfFundamentalgruppeUndZweite1941}), that the homomorphism
\begin{equation*}
    h:\pi_2(Y,y_0)\rightarrow H_2(Y;\mathbb Z)
\end{equation*}
is surjective. 

Moreover, Hurewicz theorem and the isomorphism $\pi_1(Y,y_0)\cong F_2$ imply that $H_1(Y;\mathbb Z)$ is isomorphic to the free abelian group on two generators. Hence, from the universal coefficient theorem we conclude that the homomorphism induced by taking reduction modulo 2
\begin{equation*}
    H_2(Y;\mathbb Z)\rightarrow H_2(Y;\mathbb Z/2)
\end{equation*}
is surjective as well.

Now assume by contradiction that $\pi_2^\alg(Y,y_0)= \pi_2(Y,y_0)$. In the remaining part of the proof we will make use of the algebraic homology and cohomology groups of nonsingular compact affine varieties, for their definition see \cite[p. 273]{bochnakRealAlgebraicGeometry1998}. From functoriality of algebraic homology, we have that the composition of the Hurewicz homomorphism and reduction modulo 2
\begin{equation*}
    \pi_2(Y,y_0)\rightarrow H_2(Y;\mathbb Z)\rightarrow H_2(Y;\mathbb Z/2)
\end{equation*}
maps $\pi_2^\alg(Y,y_0)$ into $H_2^\alg(Y;\mathbb Z/2)$. As we have shown in the previous paragraph, the two homomorphisms are surjective, so the assumption $\pi_2^\alg(Y,y_0)= \pi_2(Y,y_0)$ implies that $H^\alg_2(Y;\mathbb Z/2)=H_2(Y;\mathbb Z/2)$, and hence $H^1_\alg(Y;\mathbb Z/2)=H^1(Y;\mathbb Z/2)$. Let us then proceed to show that this is not true.

Let $X$ denote the exceptional divisor of the blowup $Y\rightarrow \s^3$. Since the normal bundle of $C$ in $\s^3$ is trivial, we have that $X$ is isomorphic to $C\times \R P^1$. Let $C_1,C_2$ denote the two connected components of $C$. Denote by $\alpha \in H^1_\alg(Y;\mathbb Z/2)$ the cohomology class Poincar\'{e} dual to $[C\times \R P^1]_{\mathbb Z/2}$, the fundamental class of the submanifold $C\times \R P^1$ of $Y$.

Since $X$ is homeomorphic to the two-dimensional torus, we have that the first homology group $H_1(X;\mathbb Z/2)$ of $X$ is the free abelian group generated by the fundamental classes of the four submanifolds $\{c_1\}\times \R P^1,C_1\times \{a\},\{c_2\}\times \R P^1,C_2\times \{a\}$ of $X$, where $a\in \R P^1,c_1\in C_1,c_2\in C_2$ are any points. Perturbing the manifold $C_1\times \R P^1$ inside $Y$ a little, one verifies that the modulo 2 intersection number of $C_1\times \R P^1$ with $\{c_1\}\times \R P^1$ inside $Y$ is one, while the modulo 2 intersection number of $C_1\times \R P^1$ with any of the varieties $C_1\times \{a\},C_2\times \{a\},\{c_2\}\times \R P^1$ inside $Y$ is zero. It follows, that the pullback of $\alpha$ under the inclusion $X\hookrightarrow Y$ is equal to the class Poincar\'{e} dual to $[C_1\times \{a\}]_{\mathbb Z/2}$, the fundamental class of the submanifold $C_1\times \{a\}$ of $X$. From functoriality of algebraic cohomology, we find that $[C_1\times \{a\}]_{\mathbb Z/2}\in H_1^\alg(X;\mathbb Z/2)$. Projecting onto $C$, we get $[C_1]_{\mathbb Z/2}\in H_1^\alg(C;\mathbb Z/2)$. This is a contradiction, as irreducibility of $C$ implies that $H_1^\alg(C;\mathbb Z/2)$ is the subgroup of $H_1^\alg(X;\mathbb Z/2)$ generated by the fundamental class $[C]_{\mathbb Z/2}=[C_1]_{\mathbb Z/2}+[C_2]_{\mathbb Z/2}$.
\end{ex}
\subsection*{Acknowledgements}
The research was funded by the program Excellence Initiative – Research University at the Jagiellonian University in Krakow. The author thanks the referees for their careful reading of the manuscript and for many helpful suggestions.
\printbibliography

\end{document}